\newcommand{\reals}{{\mathbb{R}}}
\newcommand{\vect}[1]{\boldsymbol{#1}}
\newcommand{\ie}{{\it i.e.}}
\newtheorem{theorem}{Theorem}
\newtheorem{lemma}[theorem]{Lemma}
\newtheorem{remark}{Remark}
\newcommand{\mycomment}[3]%
{%
\marginpar{%
  \hfil%
  \tiny{\textcolor{#2}{{\bf\textsc{#1}}}}%
  \hfil%
}%
\footnote{\textcolor{#2}{{\bf\textsc{#1}:}~~#3}}
}
\newcommand{\kakhbod}[1]%
{\mycomment{kakhbod}{blue}{#1}}
\newcommand{\jckoo}[1]%
{\mycomment{jckoo}{red}{#1}}
\begin{document}
\title{An Efficient T\^{a}tonnement Process for the Public Good Problem\\
\Large{A Decentralized Subgradient Approach}}
\author{\Large{Ali~Kakhbod$^1$, Joseph~Koo$^2$ and  Demosthenis~Teneketzis$^3$}\\ 
\large{${}^{1,3}$University of Michigan,  ${}^2$Stanford University} \\
\normalsize{Email: ${}^{1,3}${\tt{\{akakhbod,teneket\}@umich.edu}}, ${}^2${\tt{jckoo@stanford.edu}}}}


\maketitle

\begin{abstract}
We present a decentralized message exchange process (t\^{a}tonnement
process) for determining the level at which a certain public good will
be provided to a set of individuals who  finance the cost of attaining
that level.  The message exchange process we propose requires minimal
coordination overhead and converges to the optimal solution of the
corresponding centralized problem. 
\end{abstract}

\begin{keywords}
\textbf{Keywords:} Decentralized resource allocation; Public good problem; Decentralized subgradient
\end{keywords}

\section{Introduction}
\label{sec:pg_intro}

The inspiration for this work comes from the following question:

\begin{quotation}
\begin{it}
Suppose a community of individuals/agents have to determine the level
at which a certain public good will be provided to all of them.  For
example they have to decide on the quality of a public transportation
system or the resources devoted to running a public school.  The cost
of attaining any particular level of the public good has to be
financed by the individual/agent contributions.  Under the
assumption that the satisfaction (utility) of each
individual---which is a function of his contribution and the amount of
produced public good---is its private information, what is the
optimal value of the public good that maximizes the social welfare?
\end{it}
\end{quotation}

This problem is called the public good
problem~\cite{mascolell:microeconomics, redondo:econ_theory_games},
and addresses the question of allocating individual resources toward a
public good in order to maximize the sum of the individuals'
utilities.  T\^{a}tonnement processes for solving the public good
problem have been proposed and analyzed in the economics literature,
\cite{laffont:planning_with_externalities,
dreze:tatonnement_public_goods,
malinvaud:procedures_collective_consumption,
tulkens:dynamic_proc_public_goods,
ledyard1971:non_tatonnement_process}.  They have also appeared in the
context of resource allocation problems in communication networks
\cite{sharma:externalities_decent_opt_power_alloc}. Decomposition
methods for convex optimization problems resembling the public good
problem have appeared in the engineering literature
\cite{bert_tsit:parallel_distributed_comp,
palomar:tutorial_decomp_num}.

In this paper, we provide a decentralized method for solving the
public good problem. The method is different from those appearing in
\cite{laffont:planning_with_externalities,
dreze:tatonnement_public_goods,
malinvaud:procedures_collective_consumption,
tulkens:dynamic_proc_public_goods,
ledyard1971:non_tatonnement_process,
sharma:externalities_decent_opt_power_alloc}, satisfies the problem's
informational and resource constraints,  requires minimal coordination
overhead and converges to the optimal solution of the corresponding
centralized problem.  According to the method, the public good problem
is decomposed by first posing it as a convex optimization problem and
then deriving the respective dual decomposition.  We show that the
dual problem can be  separated into a single planner problem and
multiple agent subproblems.  The decomposition is amenable to
algorithms that have minimal messaging exchange overhead and satisfy
all the informational and resource constraints of the original
problem.  Moreover, the dual-decomposed problem can be solved using
simple subgradient methods, and the optimality of the solutions can be
guaranteed.

To the best of our knowledge, the approach to the solution of the public good problem  formulated/investigated  in this paper  provides a
new decomposition methodology that is simple to implement and is
physically meaningful.  Furthermore, the proposed solution methodology
does not require quasi-linearity or monotonicity properties of the
agents' utility functions.
\\

The rest of the paper is organized as follows. In Section \ref{sec:pg_formulation} we formulate the public good problem we propose to investigate. In Section \ref{sec:pg_planner_mechanism} we present a t\^{a}tonnement process that satisfies  the informational and resource constraints of the problem and leads to the optimal solution of the centralized optimization problem corresponding to the problem of Section \ref{sec:pg_formulation}. We conclude in Section \ref{con}. The proofs of technical results appear in the Appendix.\\

\textit{Notation:} We use the following notation throughout the paper.
We denote by $\|\cdot\|$  the $\ell_2$-norm. We denote vectors by
bold-face symbols, as in $\vect{x}$. We denote by $\vect{x}^T$ the
transpose of $\vect{x}$.

\section{Problem Formulation}
\label{sec:pg_formulation}

Our problem formulation is as in \cite{redondo:econ_theory_games}.
Consider a community $\mathcal{A}$ of $m$ individuals/agents, $\mathcal{A} :=
\{1,2,\ldots,m\}$, who have to determine the level $x$ at which a
certain public good will be provided to all of them.
The cost of attaining any particular level of the public good has to
be financed by individual/agent contributions $t_i$, $i=1,2,\ldots,m$,
of a private good; we may think of it as money, where $t_i$ stands for
the private contribution of individual $i$.  For simplicity, we assume
that the production of the public good displays constant returns, so
that a total contribution of $T \equiv \sum_{i=1}^m t_i$ may finance a
public good at level $x=T$.  Notice that, given constant returns, a
transformation rate between private and public good equal to one can
be simply obtained by a suitable choice of units, \ie, the units in
which money is expressed.

Let $w_i > 0$, $i=1,2,\ldots,m$, be the amount of private good
originally held by each individual/agent $i$.  The amount of $w_i$ is
agent $i$'s private information.  His preference over public good
levels and his own contribution are assumed to be represented by a
function of the form
\begin{equation}
U_i : \reals_+ \times (-\infty,w_i] \to \reals
\mbox{,}
\label{eq:public_good_def_Ui}
\end{equation}
that specifies the utility $U_i(x, t_i)$ enjoyed by agent $i$ when the
level of public good is $x$ and his individual contribution is $t_i$.
In principle, the individual contribution could be negative, which
would be interpreted as the receipt of (positive) transfers. The
function $U_i$ is agent $i$'s private information.  We assume that all
utility functions $U_i(\cdot,\cdot)$ are increasing in their first
argument, decreasing in their second argument, and jointly strictly
concave in $x$ and $t_i$.  We do not assume that  $U_i$, $i \in
\mathcal{A}$, is quasilinear or monotonic.

Now consider a planner who does not know the agents' utility functions
and the amounts $w_i$, $i \in \mathcal{A}$, of the agents' private
goods.  The planner's goal is to design a mechanism to select the
level $x$ of the public good and the individual contributions $t_i$,
$i \in \mathcal{A}$, so as to solve the weighted total utility
maximization problem
\begin{equation}
\begin{array}{ll}
\mbox{maximize}
& \displaystyle \sum_{i=1}^m \alpha_i U_i(x,t_i) \\
\mbox{subject to}
& \displaystyle \sum_{j=1}^m t_j \geq x \\
& \displaystyle  0 \leq x \leq L \\
& \displaystyle t_i\leq w_i, \; \; \forall i \in \mathcal{A}
\end{array}
\label{eq:public_good_problem}
\mbox{,}
\end{equation} 
where for any $i \in \mathcal{A}$, $\alpha_i > 0$ and $\alpha_i$ is
known to the planner.  We assume that the feasible set is nonempty;
\ie, that there exists some $\hat{x}$ and $\hat{t_i}$, $i=1,\ldots,m$,
within the domain of the objective such that $\sum_{j=1}^m t_j \geq
x$, $0 \leq x \leq L$, and $t_i \leq w_i$ for all $i=1,\ldots,m$.

If the planner knew $U_i$ and $w_i$ for any $i$, he could obtain $x$
and $t_i$ for all $i\in \mathcal{A}$ by solving the mathematical
programming problem defined by \eqref{eq:public_good_problem}.  Since
$U_i$ and $w_i$ are agent $i$'s private information the planner has to
specify a mechanism, that is, a message space,  a message exchange process and an
allocation rule which determines $x$ and  $t_i$ for any $i \in
\mathcal{A}$, based on the outcome of the message exchange process.
The mechanism must be such that $x$ and $t_i$, $i \in \mathcal{A}$, is
a solution of the mathematical programming problem defined by
\eqref{eq:public_good_problem}.

\section{The Planner's Mechanism}
\label{sec:pg_planner_mechanism}

We specify a mechanism that satisfies the problem's informational and resource
constraints and results in an allocation $x$ and $t_i$ for all $i \in
\mathcal{A}$ that is a solution of problem
\eqref{eq:public_good_problem}.  The mechanism is defined by the
following t\^{a}tonnement process described by
Algorithm~\ref{alg:public_good_decomp} below.
In this algorithm the parameters $\zeta_k$, $k = 0,1,2,\ldots$, are
chosen in a way similar to that in
\cite{nedic:incremental_subgrad_methods} so that
\[
\sum_{k=0}^{\infty} \zeta_k = \infty, \quad \quad \sum_{k=0}^{\infty}
\zeta_k^2 < \infty
\label{eq:zeta_const}
\]
(an example of such a sequence $\zeta_k$, $k=0,1,2,\ldots$, is
$\zeta_k=\frac{r}{k+1}$ where $r=\mbox{constant}$, $r>0$).

\begin{algorithm}
\caption{The Planner's Mechanism}
\label{alg:public_good_decomp}
\begin{algorithmic}[1]
\STATE \label{it:pg_initialize}
Set $k := 0$.  The planner initializes $\vect{\lambda}(0) :=
(\lambda_1(0), \lambda_2(0), \ldots, \lambda_m(0)) = \vect{0}$, and
$\vect{\mu}(0) :=
(\mu_1(0), \mu_2(0), \ldots, \mu_m(0))$ is arbitrary and bounded.  The planner also
initializes $g_{\mathrm{min}} := \infty$.
\STATE \label{it:pg_pricing_policy}
The planner announces $\vect{\lambda}(k) = (\lambda_1(k),
\lambda_2(k), \ldots, \lambda_m(k))$, $\mu_{ij}(k)$ (for all $j < i$)
and $\mu_{ji}(k)$ (for all $j > i$) to each agent $i$, $i =
1,2,\ldots,m$.
\STATE \label{it:pg_agent_subproblem}
Agent $i$, $i=1,2,\ldots,m$, solves
\[
\begin{array}{ll}
\mbox{maximize}
& \displaystyle \alpha_i U_i(x_i,t_i) + t_i \sum_{j=1}^m \lambda_j(k)
- x_i \lambda_i(k) + x_i \left[ \sum_{j=1}^{i-1} \mu_{ij}(k) -
\sum_{j=i+1}^m \mu_{ji}(k) \right] \\
\mbox{subject to}
& \displaystyle 0 \leq x_i \leq L \\
& \displaystyle t_i \leq w_i
\end{array}
\]
for variables $x_i \in \reals$ and $t_i \in \reals$.  Let
$\bar{x}_i(k)$ and $\bar{t}_i(k)$ be a solution to agent $i$'s
problem, and let $g_i(\vect{\lambda}(k), \vect{\mu}(k))$ denote the
optimal value of agent $i$'s problem at this solution.  Agent $i$
announces $\bar{x}_i(k)$, $\bar{t}_i(k)$, and $g_i(\vect{\lambda}(k),
\vect{\mu}(k))$ to the planner.
\STATE \label{it:pg_update_gmin}
The planner computes
\[
g(\vect{\lambda}(k), \vect{\mu}(k)) = \sum_{i=1}^m
g_i(\vect{\lambda}(k), \vect{\mu}(k))
\mbox{.}
\]
If $g(\vect{\lambda}(k), \vect{\mu}(k)) \leq g_{\mathrm{min}}$, the
planner updates $g_{\mathrm{min}} := g(\vect{\lambda}(k),
\vect{\mu}(k))$ and $k_{\mathrm{min}} := k$.  Set
\begin{eqnarray*}
\vect{\lambda}_{\mathrm{min}}(k) & := & \vect{\lambda}(k_{\mathrm{min}})  \\
\vect{\mu}_{\mathrm{min}}(k) & := & \vect{\mu}(k_{\mathrm{min}})
\mbox{.}
\end{eqnarray*}
\STATE \label{it:pg_subgrad_update}
The planner updates $\lambda_i(k)$, $i = 1,2,\ldots,m$, according to
\[
\lambda_i(k+1) = \left[ \lambda_i(k) - \zeta_k \left[ \sum_{j=1}^m
\bar{t}_j(k) - \bar{x}_i(k) \right] \right]^+
\mbox{.}
\]
The planner also updates $\mu_{ij}(k)$, $i,j=1,2,\ldots,m$ where
$i>j$, according to
\[
\mu_{ij}(k+1) = \mu_{ij}(k) - \zeta_k [\bar{x}_i(k) - \bar{x}_j(k)]
\mbox{.}
\]
\STATE \label{it:pg_update}
Update $k := k + 1$ and go to Step~\ref{it:pg_pricing_policy}.
\STATE \label{it:pg_set_opt_solution}
At the stationary point, \ie, when convergence is achieved, the
planner sets $k^{\star} := k_{\mathrm{min}}$.  Then $x_i^{\star} :=
\bar{x}_i(k_{\mathrm{min}})$ and $t_i^{\star} :=
\bar{t}_i(k_{\mathrm{min}})$ for all $i \in \mathcal{A}$, where
$\bar{x}_i(k_{\mathrm{min}})$ and $\bar{t}_i(k_{\mathrm{min}})$ are
associated with $(\vect{\lambda}_{\mathrm{min}}(\infty),
\vect{\mu}_{\mathrm{min}}(\infty))$.  The planner also sets
$\vect{\lambda}^{\star} := \vect{\lambda}_{\mathrm{min}}(\infty)$ and
$\vect{\mu}^{\star} := \vect{\mu}_{\mathrm{min}}(\infty)$.  The
planner charges agent $i$ the amount
\[
\gamma_i(\vect{\lambda}^{\star}, \vect{\mu}^{\star}) = {x}_i^{\star}
\lambda_i^{\star} - {x}_i^{\star} \left[ \sum_{j=1}^{i-1}
\mu_{ij}^{\star} - \sum_{j=i+1}^m \mu_{ji}^{\star} \right] -
{t}_i^{\star} \sum_{j=1}^m \lambda_j^{\star}
\mbox{.}
\]
\end{algorithmic}
\end{algorithm}

The key features of Algorithm~\ref{alg:public_good_decomp} are
described by the following theorem.
\begin{theorem}
\label{thm:pg_alg_convergence}
Under the assumption that there exists at least one solution to the
dual of problem \eqref{eq:public_good_problem} that lies in a compact
subset $\mathcal{C}$ of $\mathbb{R}_+^m\times
\mathbb{R}^{\frac{m(m-1)}{2}}$ with diameter $\Lambda$,\footnotemark
~Algorithm~\ref{alg:public_good_decomp} has the following properties:
\footnotetext{The diameter of a subset of a metric space is the least
upper bound of the distances between pairs of points in the subset.}
\begin{itemize}
\item The limits satisfy
\begin{eqnarray}
\lim_{k \rightarrow \infty} \vect{\lambda}_{\mathrm{min}}(k) & = &
\vect{\lambda}^{\star} \\
\lim_{k \rightarrow \infty} \vect{\mu}_{\mathrm{min}}(k) & = &
\vect{\mu}^{\star} 
\end{eqnarray}
\item The level of public good $\vect{x}^{\star}$ and the taxes
$\vect{t}^{\star}:=(t_1^{\star},t_2^{\star},\ldots,t_m^{\star})$ that
correspond to $\vect{\lambda}^{\star}$ and $\vect{\mu}^{\star}$ and
result from Step~\ref{it:pg_agent_subproblem} of the algorithm are
solutions of problem~\eqref{eq:public_good_problem}.
\end{itemize}
\end{theorem}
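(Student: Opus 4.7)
The plan is to view Algorithm~\ref{alg:public_good_decomp} as a projected subgradient method applied to the Lagrangian dual of an equivalent reformulation of problem~\eqref{eq:public_good_problem}, and then combine classical subgradient convergence results with strong duality and strict concavity of the $U_i$'s to obtain both the dual and the primal claims.

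First, I would rewrite~\eqref{eq:public_good_problem} by introducing per-agent copies $x_i$ of the public-good level together with consensus constraints $x_i - x_j = 0$ for $i>j$, and replacing $\sum_j t_j \geq x$ by $\sum_j t_j \geq x_i$ for every $i$. Forming the Lagrangian with multipliers $\lambda_i \geq 0$ for the $m$ inequality constraints and $\mu_{ij}\in\reals$ (for $i>j$) for the $\binom{m}{2}$ equality constraints, and leaving the local constraints $0\le x_i\le L$, $t_i\le w_i$ inside the inner maximization, I would check that the Lagrangian separates exactly into the $m$ agent subproblems of Step~\ref{it:pg_agent_subproblem}. A direct application of Danskin's/the envelope theorem then identifies $(\sum_j \bar t_j(k)-\bar x_i(k))_i$ and $(\bar x_i(k)-\bar x_j(k))_{i>j}$ as subgradients of $g$ at $(\vect{\lambda}(k),\vect{\mu}(k))$, so that Step~\ref{it:pg_subgrad_update} is precisely a projected subgradient step on the dual function.

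Next, I would invoke the convergence theory of projected subgradient methods with diminishing step size as in~\cite{nedic:incremental_subgrad_methods}. The compactness of $0\le x_i\le L$ together with strict concavity of $U_i$ and monotonicity in $t_i$ bounds the maximizers $\bar t_i(k)$ uniformly, which in turn yields a uniform bound on the subgradients. Combined with $\sum_k\zeta_k=\infty$, $\sum_k\zeta_k^2<\infty$ and the hypothesis that some dual optimum lies in the compact set $\mathcal{C}$ of diameter $\Lambda$, standard arguments give $\liminf_k g(\vect{\lambda}(k),\vect{\mu}(k))=g^\star$. Since Step~\ref{it:pg_update_gmin} records the running best iterate, the sequence $g(\vect{\lambda}_{\mathrm{min}}(k),\vect{\mu}_{\mathrm{min}}(k))$ is monotone non-increasing and converges to $g^\star$; compactness of $\mathcal{C}$ and closedness of the dual optimal set then deliver the limits $\vect{\lambda}_{\mathrm{min}}(k)\to\vect{\lambda}^\star$ and $\vect{\mu}_{\mathrm{min}}(k)\to\vect{\mu}^\star$ asserted in the first bullet.

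Finally, for the second bullet I would combine strong duality with uniqueness of the Lagrangian maximizer. The primal is concave-maximization over a convex feasible set, and the assumed nonemptiness of the feasible set together with the open conditions $t_i<w_i$, $0<x<L$ gives a Slater point, so $g^\star$ equals the primal optimum. Strict joint concavity of each $U_i(x_i,t_i)$ makes each agent's subproblem strictly concave on its compact feasible set, whence $(\bar x_i(k),\bar t_i(k))$ is unique and depends continuously on $(\vect{\lambda}(k),\vect{\mu}(k))$. Passing to the limit along the best-iterate subsequence, the resulting $(\vect{x}^\star,\vect{t}^\star)$ maximizes the Lagrangian at $(\vect{\lambda}^\star,\vect{\mu}^\star)$, and strong duality then forces the KKT conditions — in particular $x_1^\star=\cdots=x_m^\star$, $\sum_j t_j^\star\ge x^\star$, and complementary slackness — showing that $(\vect{x}^\star,\vect{t}^\star)$ solves~\eqref{eq:public_good_problem}. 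The main obstacle is this last primal-recovery step: subgradient methods do not in general produce feasible primal iterates, and the argument relies essentially on strict concavity of the $U_i$'s to guarantee uniqueness and continuity of the inner maximizers, without which one would have to resort to averaging or ergodic-sequence techniques.
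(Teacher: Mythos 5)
Your proposal follows essentially the same route as the paper: introduce per-agent copies $x_i$ with consensus constraints $x_i=x_j$, dualize so the Lagrangian separates into the agent subproblems of Step~\ref{it:pg_agent_subproblem}, identify Step~\ref{it:pg_subgrad_update} as a projected subgradient step with bounded subgradients, apply the diminishing-step-size convergence bound to the best recorded iterate using the compact set $\mathcal{C}$ of diameter $\Lambda$, and finish via Slater/strong duality together with strict concavity of the $U_i$ for primal recovery. This matches the paper's appendix (its Lemmas on the subgradient bound and on $g(\vect{\lambda}_{\mathrm{min}}(k),\vect{\mu}_{\mathrm{min}}(k))\to g(\vect{\lambda}',\vect{\mu}')$), including the same final primal-recovery argument, so no further comparison is needed.
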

\begin{proof}
See the appendix.
\end{proof}
\begin{remark}
\label{rem1}
The assumption under which the assertions of the theorem hold is not
restrictive.  In fact if, based on the problem, the compact set
$\mathcal{C}$ and its diameter $\Lambda$ are chosen appropriately by
the planner, a solution of the dual problem of
\eqref{eq:public_good_problem} will lie in $\mathcal{C}$.
\end{remark}

\begin{section}{Conclusion}
\label{con}
We presented a t\^{a}tonnement process to determine the level at which a certain public good must be provided to a set of individuals. The t\^{a}tonnement process satisfies the informational and resource constraints of the public good problem, requires minimal coordination overhead and converges to the optimal solution of the corresponding centralized problem. Furthermore, the proposed solution methodology dose not require quasi-linearity or monotonicity properties of the agents' utility functions.  
\end{section}

\bibliographystyle{IEEEtran} 
\bibliography{IEEEabrv,references}


\appendix[\textbf{\textit{Proof of Theorem}}~\ref{thm:pg_alg_convergence}]
\label{app:proof_alg_convergence}

Note that for ease of exposition, we have set the subgradient update
coefficient $\zeta_k$ to be the same for both the $\vect{\lambda}$
updates and the $\vect{\mu}$ updates in
Step~\ref{it:pg_subgrad_update} of
Algorithm~\ref{alg:public_good_decomp}.  This is not strictly
necessary, as our proofs can be easily adapted---with some additional
bookkeeping---to the case when the two updates have different
coefficients.



We first redefine the utility functions in order to make some of the
problem's constraints implicit.  For all $i \in \mathcal{A}$, let
\begin{equation}
\tilde{U}_i(z,u) = \left\{ \begin{array}{ll}
U_i(z,u), & \mbox{if} ~0 \leq z \leq L ~\mbox{and} ~u \leq w_i \\
-\infty, & \mbox{otherwise}
\end{array} \right.
\label{eq:public_good_utility_redefined}
\mbox{.}
\end{equation}
Then problem \eqref{eq:public_good_problem} is equivalent to
\begin{equation}
\begin{array}{ll}
\mbox{maximize}
& \displaystyle \sum_{i=1}^m \alpha_i \tilde{U}_i(x,t_i) \\
\mbox{subject to}
& \displaystyle \sum_{j=1}^m t_j \geq x
\end{array}
\label{eq:public_good_implicit_constraints_problem}
\mbox{.}
\end{equation}
We expand problem \eqref{eq:public_good_problem} as follows:
\begin{equation}
\begin{array}{ll}
\mbox{maximize}
& \displaystyle \sum_{i=1}^m \alpha_i \tilde{U}_i(x_i,t_i) \\
\mbox{subject to}
& \displaystyle \sum_{j=1}^m t_j \geq x_i, \; \; \forall i \in
\mathcal{A} \\
& \displaystyle x_i = x_j, \; \; \forall (i, j) ~\mbox{such that} ~i
\ne j
\end{array}
\label{eq:expanded_public_good_problem}
\mbox{,}
\end{equation} 
where the optimization variables are now $x_i \in \reals$ and $t_i \in
\reals$, for all $i \in \mathcal{A}$.  Define $\vect{x} = (x_1, x_2,
\ldots, x_m)$, $\vect{t} = (t_1, t_2, \ldots, t_m)$,
$\vect{\lambda}=(\lambda_1,\lambda_2,\ldots,\lambda_m)$ and
$\vect{\mu}=(\mu_{ij} \; | \; i,j=1,2,\ldots,m, ~\mbox{and} ~i > j)$.

The Lagrangian of \eqref{eq:expanded_public_good_problem} is
\begin{eqnarray}
L(\vect{x}, \vect{t}, \vect{\lambda}, \vect{\mu}) 
& = & \sum_{i=1}^m \alpha_i \tilde{U}_i({x_i},t_i) + \sum_{i=1}^m
      \lambda_i \left[ \sum_{j=1}^m t_j - x_i \right] + \sum_{(i,j): i
      > j} \mu_{ij} \left[ x_i - x_j \right]  \\
& = & \sum_{i=1}^m \left( \alpha_i \tilde{U}_i(x_i,t_i) + t_i
      \sum_{j=1}^m \lambda_j - x_i \lambda_i + x_i \left[
      \sum_{j=1}^{i-1} \mu_{ij} - \sum_{j=i+1}^m \mu_{ji} \right]
      \right)
\mbox{,}
\end{eqnarray}
where $\lambda_i$, $i = 1,\ldots,m$, are the respective Lagrange
multipliers associated with the financing constraints $\sum_{j=1}^m
t_j \geq x_i$, and $\mu_{ij}$, $i,j = 1,\ldots,m$ and $i > j$,
represent the multipliers associated with the pairwise equalities $x_i
= x_j$.  Notice that considering all pairs $(i,j)$ where $i \ne j$, is
the same as considering all $i$ and $j$ such that $i > j$.  

Let $g(\vect{\lambda}, \vect{\mu}) = \sup_{\vect{x}, \vect{t}}
L(\vect{x}, \vect{t}, \vect{\lambda}, \vect{\mu})$. Then the dual to
problem~\eqref{eq:expanded_public_good_problem} is
\begin{equation}
\begin{array}{ll}
\mbox{minimize} & g(\vect{\lambda}, \vect{\mu}) \\
\mbox{subject to} & \lambda_i \geq 0, \; \; \forall i = 1,2,\ldots,m
\end{array}
\label{eq:public_good_dual_problem}
\mbox{,}
\end{equation}
where the variables are $\vect{\lambda} \in \mathbb{R}_+^m$ and
$\vect{\mu}\in \mathbb{R}^{\frac{m(m-1)}{2}}$.  By assumption, there
is at least one solution of \eqref{eq:public_good_dual_problem} that
lies in $\mathcal{C}$.

We decompose $g(\vect{\lambda}, \vect{\mu})$ so that
$g(\vect{\lambda}, \vect{\mu}) = \sum_{i=1}^m g_i(\vect{\lambda},
\vect{\mu})$, where $g_i(\vect{\lambda},\vect{\mu}), i\in
\mathcal{A},$ is the optimal value of the problem
\begin{equation}
\mbox{maximize} \quad  \alpha_i \tilde{U}_i(x_i,t_i) + t_i
\sum_{j=1}^m \lambda_j - x_i \lambda_i + x_i \left[ \sum_{j=1}^{i-1}
\mu_{ij} - \sum_{j=i+1}^m \mu_{ji} \right]
\end{equation}
which is equivalent to the problem
\begin{equation}
\begin{array}{ll}
\mbox{maximize}
& \displaystyle \alpha_i U_i(x_i,t_i) + t_i \sum_{j=1}^m \lambda_j -
x_i \lambda_i + x_i \left[ \sum_{j=1}^{i-1} \mu_{ij} - \sum_{j=i+1}^m
\mu_{ji} \right] \\
\mbox{subject to}
& \displaystyle 0 \leq x_i \leq L \\
& \displaystyle t_i \leq w_i
\end{array}
\label{eq:public_good_agent_subproblem}
\mbox{,}
\end{equation}
where  $x_i \in \reals$ and $t_i \in \reals$ are the optimization
variables and $\vect{\lambda}$ and $\vect{\mu}$ are fixed.  We denote
by $\bar{x}_i$ and $\bar{t}_i$  the solution to problem
\eqref{eq:public_good_agent_subproblem}.

We define 
\begin{equation}
s_i := \sum_{j=1}^m \bar{t}_j - \bar{x}_i, \; \; \forall i \in
\mathcal{A}
\label{eq:public_good_subgrad_lambda}
\mbox{,}
\end{equation}
and
\begin{equation}
r_{ij} := \bar{x}_i - \bar{x}_j, \; \; \forall (i,j) ~\mbox{such that}
~i>j
\label{eq:public_good_subgrad_mu}
\mbox{.}
\end{equation}
For a given set of solutions $\bar{\vect{x}} = (\bar{x}_1, \bar{x}_2,
\ldots, \bar{x}_m)$ and $\bar{\vect{t}} = (\bar{t}_1, \bar{t}_2,
\ldots, \bar{t}_m)$ to the agent subproblems
\eqref{eq:public_good_agent_subproblem}, the quantities $s_i$ and
$r_{ij}$ are the subgradients of $g(\vect{\lambda}, \vect{\mu})$ with
respect to $\lambda_i$ and $\mu_{ij}$, respectively, for $i,j =
1,2,\ldots,m$ and $i > j$.\footnotemark
\footnotetext{Thus, in Step~\ref{it:pg_subgrad_update} of
Algorithm~\ref{alg:public_good_decomp}, since the term $\bar{x}_i -
\bar{x}_j$ of \eqref{eq:public_good_subgrad_mu} gives the mismatch in
the level of public good as desired by the agents $i$ and $j$, the
update for $\mu_{ij}$ can be interpreted as the price of being at the
current level of mismatch; the subgradient update gives a mechanism
for incentivizing pairwise negotiations between agents $i$ and $j$ to
reach a consensus.}

To proceed with the proof we require the following intermediate
lemmas.  Here, we only state the lemmas and prove them later.
\begin{lemma}
\label{thm:upper_bound_subgrads}
There exists a $\Upsilon$ such that for any $k$,
\[
\|\vect{s}^{(k)}\|^2 + \|\vect{r}^{(k)}\|^2 \leq \Upsilon
\mbox{.}
\]
\end{lemma}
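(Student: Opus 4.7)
The plan is to bound the two blocks of the subgradient, $\vect{s}^{(k)}$ and $\vect{r}^{(k)}$, separately, exploiting the explicit feasibility constraints enforced by each agent subproblem together with the strict concavity of each $U_i$.

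First I would dispose of $\vect{r}^{(k)}$. The agent subproblem in Step~\ref{it:pg_agent_subproblem} forces $0\leq\bar{x}_i(k)\leq L$ uniformly in $i$ and $k$, so
\[
|r_{ij}^{(k)}|=|\bar{x}_i(k)-\bar{x}_j(k)|\leq L
\]
for every admissible pair, giving $\|\vect{r}^{(k)}\|^2\leq \tfrac{m(m-1)}{2}L^2$ at once. The same $[0,L]$ bound handles the $\bar{x}_i$ contribution inside $s_i^{(k)}=\sum_j\bar{t}_j(k)-\bar{x}_i(k)$, reducing the remaining task to producing a uniform bound on each $\bar{t}_j(k)$.

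The upper bound $\bar{t}_j(k)\leq w_j$ is immediate from the constraint set. For the lower bound I would invoke strict concavity of $U_j$ in $t_j$: when the constraint $t_j\leq w_j$ is inactive, the KKT condition for the strictly concave agent subproblem gives
\[
\alpha_j\,\frac{\partial U_j}{\partial t_j}\bigl(\bar{x}_j(k),\bar{t}_j(k)\bigr) = -\sum_{i=1}^m \lambda_i(k).
\]
Because $\partial U_j/\partial t_j$ is continuous and strictly decreasing in $t_j$, inverting this relation realizes $\bar{t}_j(k)$ as a continuous function of $(\bar{x}_j(k),\sum_i\lambda_i(k))$, so a uniform lower bound on $\bar{t}_j(k)$ follows as soon as the dual iterates $\vect{\lambda}(k)$ stay in a bounded set.

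The main obstacle is producing that boundedness of $\vect{\lambda}(k),\vect{\mu}(k)$ without already having the subgradient bound in hand. I would resolve this by anchoring at a dual optimizer $(\vect{\lambda}^{\star},\vect{\mu}^{\star})\in\mathcal{C}$ and tracking the Lyapunov potential $\Phi(k):=\|\vect{\lambda}(k)-\vect{\lambda}^{\star}\|^2+\|\vect{\mu}(k)-\vect{\mu}^{\star}\|^2$. Using nonexpansiveness of the projection onto $\reals_+^m$ from Step~\ref{it:pg_subgrad_update} together with the convex subgradient inequality for $g$, one obtains
\[
\Phi(k+1)\leq \Phi(k) + \zeta_k^2\bigl(\|\vect{s}^{(k)}\|^2+\|\vect{r}^{(k)}\|^2\bigr) - 2\zeta_k\bigl(g(\vect{\lambda}(k),\vect{\mu}(k))-g(\vect{\lambda}^{\star},\vect{\mu}^{\star})\bigr),
\]
whose last term is nonnegative. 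A joint induction then closes the argument: the bound on $\Phi(k)$ carried in from the previous step, together with $\Lambda$ and the continuity of the $\bar{t}_j(\cdot)$ map established above, bounds the subgradient at iteration $k$, and substituting this back into the Lyapunov inequality together with $\sum_k\zeta_k^2<\infty$ preserves boundedness of $\Phi(k+1)$. The resulting uniform bound on $(\vect{\lambda}(k),\vect{\mu}(k))$ yields a uniform bound on the $\bar{t}_j(k)$, hence a constant $\Upsilon$ depending on $L$, $m$, $\max_j w_j$, $\Lambda$, $\Phi(0)$, $\sum_k \zeta_k^2$, and the modulus of strict concavity of the $U_i$.
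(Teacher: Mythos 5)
Your bound on $\vect{r}^{(k)}$ is fine and matches the paper's. The divergence, and the problem, is in how you obtain a lower bound on the $\bar{t}_j(k)$. The paper's proof is entirely primal and elementary: it uses $0 \leq \bar{x}_i \leq L$, $\bar{t}_j \leq w_j$, and the financing inequality $\bar{x}_i \leq \sum_{j}\bar{t}_j$ to sandwich each contribution, $-\sum_{j\neq i} w_j \leq \bar{t}_i \leq w_i$, so that $|s_i| \leq L + m\max_j w_j$ with a constant $\Upsilon$ depending only on $L$, $m$, and the $w_j$ --- completely independent of the dual iterates and of $k$. Your route instead passes through the first-order condition $\alpha_j\,\partial U_j/\partial t_j = -\sum_i\lambda_i(k)$ and boundedness of $(\vect{\lambda}(k),\vect{\mu}(k))$, and this is where genuine gaps appear.

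First, the KKT step needs hypotheses the paper does not grant: the $U_i$ are only assumed strictly concave, not differentiable, and --- more importantly --- inverting $\partial U_j/\partial t_j$ to get a bound on $\bar{t}_j$ that is controlled by a bound on $\sum_i\lambda_i(k)$ requires a uniform positive modulus of concavity (strong concavity). Strict concavity alone permits $\partial U_j/\partial t_j$ to flatten, so $\bar{t}_j$ as a function of $\sum_i\lambda_i$ can grow arbitrarily fast, and the agent subproblem may not even attain its supremum in $t_j$ when $\sum_i\lambda_i(k)$ is small (e.g.\ at $\vect{\lambda}(0)=\vect{0}$ the objective is decreasing in $t_j$ on the unbounded domain $(-\infty,w_j]$). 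Second, the "joint induction" does not close as stated. The Lyapunov recursion gives
\[
\Phi(k+1)\leq \Phi(k)+\zeta_k^2\,B\bigl(\Phi(k)\bigr),
\]
where $B(\Phi)$ is whatever bound on $\|\vect{s}^{(k)}\|^2+\|\vect{r}^{(k)}\|^2$ you can extract from $\Phi(k)\leq\Phi$. Telescoping this with $\sum_k\zeta_k^2<\infty$ yields a uniform bound only if $B$ grows at most linearly in $\Phi$; since $\|\vect{s}^{(k)}\|^2$ is quadratic in the $\bar{t}_j$ and, absent strong concavity, $\bar{t}_j$ can depend superlinearly on $\|\vect{\lambda}(k)\|$, the recursion can blow up in finite "time" $\sum_k\zeta_k^2$. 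So the circularity you flag is not actually resolved. The fix is to drop the dual machinery entirely and, as the paper does, derive the lower bound on $\bar{t}_i$ from the primal inequalities $0\leq\bar{x}_i\leq\sum_j\bar{t}_j$ and $\bar{t}_j\leq w_j$ alone (noting that the paper implicitly takes the financing inequality to hold at the subproblem solutions).
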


\begin{lemma}
\label{thm:properties_subgrad_update}
For any solution $(\vect{\lambda}',\vect{\mu}')$ of
\eqref{eq:public_good_dual_problem} in $\mathcal{C}$,
\begin{equation}
\lim_{k \rightarrow \infty} g(\vect{\lambda}_{\mathrm{min}}(k),
\vect{\mu}_{\mathrm{min}}(k)) = g(\vect{\lambda}', \vect{\mu}')
\mbox{.}
\end{equation}
(where $\vect{\lambda}_{\mathrm{min}}(k),
\vect{\mu}_{\mathrm{min}}(k)$ are defined in step 4 of Algorithm 1.)
\end{lemma}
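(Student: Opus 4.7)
The plan is to recognize Step~\ref{it:pg_subgrad_update} of Algorithm~\ref{alg:public_good_decomp} as a projected subgradient iteration for the convex dual problem~\eqref{eq:public_good_dual_problem}: the $[\cdot]^+$ operator is the Euclidean projection of $\vect{\lambda}$ onto $\mathbb{R}_+^m$, and no projection is needed for $\vect{\mu}$ since its feasible set is all of $\mathbb{R}^{m(m-1)/2}$. With the diminishing step sizes satisfying $\sum_{k=0}^{\infty}\zeta_k=\infty$ and $\sum_{k=0}^{\infty}\zeta_k^2<\infty$, the classical ``best--so--far'' analysis of the subgradient method applied to a convex minimization will deliver the claim, with Lemma~\ref{thm:upper_bound_subgrads} supplying the uniform subgradient bound needed to close the argument.

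First I would use the nonexpansiveness of the projection onto $\mathbb{R}_+^m$ on the $\vect{\lambda}$ update, together with the identity obtained from the unconstrained $\vect{\mu}$ update, to derive the one--step inequality
\[
\|\vect{\lambda}(k+1)-\vect{\lambda}'\|^2 + \|\vect{\mu}(k+1)-\vect{\mu}'\|^2 \leq \|\vect{\lambda}(k)-\vect{\lambda}'\|^2 + \|\vect{\mu}(k)-\vect{\mu}'\|^2 - 2\zeta_k \bigl[\vect{s}^{(k)T}(\vect{\lambda}(k)-\vect{\lambda}') + \vect{r}^{(k)T}(\vect{\mu}(k)-\vect{\mu}')\bigr] + \zeta_k^2 \bigl[\|\vect{s}^{(k)}\|^2+\|\vect{r}^{(k)}\|^2\bigr].
\]
Because $g$ is convex and $(\vect{s}^{(k)},\vect{r}^{(k)})$ is a subgradient of $g$ at $(\vect{\lambda}(k),\vect{\mu}(k))$, the defining subgradient inequality yields
\[
\vect{s}^{(k)T}(\vect{\lambda}(k)-\vect{\lambda}') + \vect{r}^{(k)T}(\vect{\mu}(k)-\vect{\mu}') \geq g(\vect{\lambda}(k),\vect{\mu}(k)) - g(\vect{\lambda}',\vect{\mu}').
\]

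Next I would telescope these inequalities from $k=0$ to $k=K-1$, use Lemma~\ref{thm:upper_bound_subgrads} to bound the noise term by $\Upsilon\sum_{k=0}^{K-1}\zeta_k^2$, and lower--bound the signed sum by $\bigl(g(\vect{\lambda}_{\mathrm{min}}(K-1),\vect{\mu}_{\mathrm{min}}(K-1)) - g(\vect{\lambda}',\vect{\mu}')\bigr)\sum_{k=0}^{K-1}\zeta_k$, which is valid because Step~\ref{it:pg_update_gmin} guarantees $g(\vect{\lambda}_{\mathrm{min}}(K-1),\vect{\mu}_{\mathrm{min}}(K-1)) \leq g(\vect{\lambda}(k),\vect{\mu}(k))$ for every $k\leq K-1$. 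Discarding the nonnegative distance term on the left produces
\[
g(\vect{\lambda}_{\mathrm{min}}(K-1),\vect{\mu}_{\mathrm{min}}(K-1)) - g(\vect{\lambda}',\vect{\mu}') \leq \frac{\|\vect{\lambda}(0)-\vect{\lambda}'\|^2 + \|\vect{\mu}(0)-\vect{\mu}'\|^2 + \Upsilon\sum_{k=0}^{K-1}\zeta_k^2}{2\sum_{k=0}^{K-1}\zeta_k}.
\]
Sending $K\to\infty$, the numerator remains bounded since $\sum\zeta_k^2<\infty$, while the denominator diverges, so the right--hand side tends to zero. Combined with the reverse inequality $g(\vect{\lambda}_{\mathrm{min}}(k),\vect{\mu}_{\mathrm{min}}(k)) \geq g(\vect{\lambda}',\vect{\mu}')$ (because $(\vect{\lambda}',\vect{\mu}')$ minimizes $g$ over the feasible set and the iterates remain feasible by the $[\cdot]^+$ projection), the desired limit follows.

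The main care--point I anticipate is ensuring finiteness of the numerator: the planner initializes $\vect{\lambda}(0)=\vect{0}$ and chooses $\vect{\mu}(0)$ arbitrary but bounded, and the hypothesis $(\vect{\lambda}',\vect{\mu}')\in\mathcal{C}$ with diameter $\Lambda$ bounds the initial--distance term in terms of $\Lambda$ and the chosen $\vect{\mu}(0)$. A secondary verification is that Lemma~\ref{thm:upper_bound_subgrads} holds uniformly in $k$; inspecting \eqref{eq:public_good_subgrad_lambda}--\eqref{eq:public_good_subgrad_mu} shows that $\vect{s}^{(k)}$ and $\vect{r}^{(k)}$ are built only from the uniformly bounded quantities $\bar{x}_i(k)\in[0,L]$ and $\bar{t}_j(k)\leq w_j$, so the subgradient magnitudes are controlled independently of how $(\vect{\lambda}(k),\vect{\mu}(k))$ evolves, which is exactly what is needed for the telescoped bound to collapse as $K\to\infty$.
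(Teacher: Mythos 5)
Your proposal is correct and follows essentially the same route as the paper's own proof: the standard best-iterate analysis of the projected subgradient method, using nonexpansiveness of the projection, the subgradient inequality, the uniform bound $\Upsilon$ from Lemma~\ref{thm:upper_bound_subgrads}, and the telescoped bound whose right-hand side vanishes because $\sum_k \zeta_k = \infty$ and $\sum_k \zeta_k^2 < \infty$. Your explicit invocation of the reverse inequality $g(\vect{\lambda}_{\mathrm{min}}(k), \vect{\mu}_{\mathrm{min}}(k)) \geq g(\vect{\lambda}', \vect{\mu}')$ to pass from ``limit $\leq 0$'' to ``limit $= 0$'' is a small clarification the paper leaves implicit, but it is not a different argument.
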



Now we complete the proof of Theorem~\ref{thm:pg_alg_convergence}
using the results of the above lemmas.\\

From Lemma~\ref{thm:properties_subgrad_update}, we know that
$g(\vect{\lambda}_{\mathrm{min}}(k), \vect{\mu}_{\mathrm{min}}(k))$
tends to $g(\vect{\lambda}', \vect{\mu}')$, where $(\vect{\lambda}',
\vect{\mu}')$ is an optimal solution of
\eqref{eq:public_good_dual_problem}.  At convergence, the solutions
${x}_i^{\star}$ and ${t}_i^{\star}$, which are the maximizers from
Step~\ref{it:pg_agent_subproblem} of
Algorithm~\ref{alg:public_good_decomp} when $\vect{\lambda} =
\vect{\lambda}^{\star}$ and $\vect{\mu} = \vect{\mu}^{\star}$, are the
same as the maximizers for the agent subproblem
\eqref{eq:public_good_agent_subproblem} (for all $i \in \mathcal{A}$).
Furthermore, at convergence we get $x_i^{\star}=x_j^{\star}=x^{\star}$
for all $i,j \in \mathcal{A}$, $j\neq i$ because of
Step~\ref{it:pg_subgrad_update}  of the algorithm and the fact that
$\zeta_k>0$ for all $k=1,2,\ldots$.  By strong duality,\footnotemark
~the dual value at the solution to \eqref{eq:public_good_dual_problem}
is the same as the optimal value for
\eqref{eq:expanded_public_good_problem}, which, in turn, is the same
as the optimal value for problem~\eqref{eq:public_good_problem}.
\footnotetext{In our problem strong duality holds because
problem~\eqref{eq:public_good_problem} is a convex problem and
Slater's condition~\cite{boyd:convexopt} is satisfied.} Because the
objective function for \eqref{eq:public_good_agent_subproblem} is, by assumption,
strictly concave, the optimal solution for each agent's subproblem
\eqref{eq:public_good_agent_subproblem} is unique, and so
${x}^{\star}$ is the solution to the
problem~\eqref{eq:public_good_problem}.

We now proceed to prove Lemmas \ref{thm:upper_bound_subgrads} and \ref{thm:properties_subgrad_update}.

\textbf{Proof of Lemma \ref{thm:upper_bound_subgrads}}

\begin{proof}
For any $i \in \mathcal{A}$, $0 \leq \bar{x}_i \leq L$.  Also since
$\bar{t}_i \leq w_i$, we have
\begin{equation}
0 \leq \bar{x}_i \leq \sum_{j \in \mathcal{A}} \bar{t}_j = \bar{t}_i +
\sum_{\substack{j \in \mathcal{A} \\ j \neq i}} \bar{t}_j \leq
\bar{t}_i + \sum_{\substack{j \in \mathcal{A} \\ j \neq i}} w_j
\mbox{.}
\label{eq:derive_bounds_ti}
\end{equation}
Rearranging \eqref{eq:derive_bounds_ti} we obtain
\begin{equation}
-\sum_{\substack{j \in \mathcal{A} \\ j \neq i}} w_j \leq \bar{t}_i
\leq w_i, \quad \forall i \in \mathcal{A}
\mbox{.}
\label{eq:bounds_ti}
\end{equation}
Now, by using \eqref{eq:bounds_ti} and the definitions of $s_i$ and
$r_{ij}$ (equations \eqref{eq:public_good_subgrad_lambda} and
\eqref{eq:public_good_subgrad_mu}, respectively), we can show that, for any
$i=1,2,\ldots,m$,
\begin{eqnarray}
|s_i| & \leq & L+ m \max_{1 \leq j \leq m} w_j \\
|r_{ij}| & \leq & 2 L
\mbox{.}
\end{eqnarray}
Therefore,
\begin{eqnarray}
||\vect{s}^{(k)}||^2 + ||\vect{r}^{(k)}||^2
& \leq & \left(m (L+ m \max_{1 \leq j \leq m} w_j)^2\right) + m(2L)^2 \nonumber \\
& \leq & (4m+m(m+1)^2)\left[\max\{L,w_1,w_2,\ldots,w_m\}\right]^2:=\Upsilon
\mbox{.}
\end{eqnarray}
\end{proof}

\textbf{Proof of Lemma \ref{thm:properties_subgrad_update}}

\begin{proof}
Consider $(\vect{\lambda}', \vect{\mu}')$ as a solution of
\eqref{eq:public_good_dual_problem}. 
Then
\begin{eqnarray}
\lefteqn{ \|(\vect{\lambda}(k+1), \vect{\mu}(k+1)) - (\vect{\lambda}',
\vect{\mu}')\|^2 }  \nonumber \\
& = & \|\vect{\lambda}(k+1) - \vect{\lambda}'\|^2 + \|\vect{\mu}(k+1)
- \vect{\mu}'\|^2  \nonumber \\
& = & \|\left[ \vect{\lambda}(k) - \zeta_k \vect{s}^{(k)} \right]^+ -
\vect{\lambda}'\|^2 + \|\vect{\mu}(k) - \zeta_k \vect{r}^{(k)} -
\vect{\mu}'\|^2  \nonumber \\
& \leq & \|\vect{\lambda}(k) - \zeta_k \vect{s}^{(k)} -
\vect{\lambda}'\|^2 + \|\vect{\mu}(k) - \zeta_k \vect{r}^{(k)} -
\vect{\mu}'\|^2  \nonumber \\
& = & \|\vect{\lambda}(k) - \vect{\lambda}'\|^2 + \|\vect{\mu}(k) -
\vect{\mu}'\|^2 + \zeta_k^2 \left( \|\vect{s}^{(k)}\|^2 +
\|\vect{r}^{(k)}\|^2 \right)  \nonumber \\
&& {-} \: 2 \zeta_k \left( [\vect{s}^{(k)}]^T (\vect{\lambda}(k) -
\vect{\lambda}') + [\vect{r}^{(k)}]^T (\vect{\mu}(k) - \vect{\mu}')
\right)  \nonumber \\
& \leq & \|\vect{\lambda}(k) - \vect{\lambda}'\|^2 + \|\vect{\mu}(k)
- \vect{\mu}'\|^2 + \zeta_k^2 \left( \|\vect{s}^{(k)}\|^2 +
\|\vect{r}^{(k)}\|^2 \right)  \nonumber \\
&& {-} \: 2 \zeta_k \left( g(\vect{\lambda}(k), \vect{\mu}(k)) -
g(\vect{\lambda}', \vect{\mu}') \right)
\label{eq:dual_vars_contract_expand_s_g} \\
& \leq & \|\vect{\lambda}(k) - \vect{\lambda}'\|^2 + \|\vect{\mu}(k) -
\vect{\mu}'\|^2 + \zeta_k^2 \Upsilon - 2 \zeta_k \left(
g(\vect{\lambda}(k), \vect{\mu}(k)) - g(\vect{\lambda}', \vect{\mu}')
\right)
\label{eq:dual_vars_upper_bound_subgrads}
%
\mbox{,}
\end{eqnarray}
where \eqref{eq:dual_vars_contract_expand_s_g} is due to the
definition of subgradient\footnotemark ~of $g$ at
$(\vect{\lambda}(k),\vect{\mu}(k))$, and
\eqref{eq:dual_vars_upper_bound_subgrads} results from
Lemma~\ref{thm:upper_bound_subgrads}.
\footnotetext{A subgradient of $g$ at a point $\vect{x}$ is a vector
$\vect{s}$ that satisfies the inequality $g(\vect{y}) - g(\vect{x})
\geq \vect{s}^T (\vect{y} - \vect{x})$ for all $\vect{y}$.}
Repeatedly using \eqref{eq:dual_vars_upper_bound_subgrads} along with the assumption $\|\vect{\lambda}(0) -
\vect{\lambda}'\|^2 + \|\vect{\mu}(0) - \vect{\mu}'\|^2 \leq \Lambda$ (Remark \ref{rem1})  we obtain

\begin{eqnarray}
0
& \leq & \|(\vect{\lambda}(k+1), \vect{\mu}(k+1)) - (\vect{\lambda}',
\vect{\mu}')\|^2 \nonumber \\
& \leq & \Lambda + \sum_{i=0}^k \zeta_i^2 \Upsilon  - 2 \sum_{i=0}^k
\zeta_i \left( g(\vect{\lambda}(i), \vect{\mu}(i)) -
g(\vect{\lambda}', \vect{\mu}') \right)
\label{eq:dual_function_converge_recursion}
\mbox{.}
\end{eqnarray}

Since
\begin{equation}
\sum_{i=0}^k \zeta_i \left( g(\vect{\lambda}(i), \vect{\mu}(i)) -
g(\vect{\lambda}', \vect{\mu}') \right) \geq \left( \min_{0 \leq i
\leq k} g(\vect{\lambda}(i), \vect{\mu}(i)) - g(\vect{\lambda}',
\vect{\mu}') \right) \sum_{i=0}^k \zeta_i
\mbox{,}
\label{eq:dual_vars_contract_condition}
\end{equation}
\eqref{eq:dual_vars_contract_condition} along with
 \eqref{eq:dual_function_converge_recursion} imply that
\begin{equation}
0 \leq \Lambda + \sum_{i=0}^k \zeta_k^2 \Upsilon  - 2 \left( \min_{0
\leq i \leq k} g(\vect{\lambda}(i), \vect{\mu}(i)) -
g(\vect{\lambda}', \vect{\mu}') \right) \sum_{i=0}^k \zeta_i
\mbox{.}
\end{equation}
Therefore,
\begin{equation}
\min_{0 \leq i \leq k} g(\vect{\lambda}(i), \vect{\mu}(i)) -
g(\vect{\lambda}', \vect{\mu}') \leq \frac{\Lambda + \sum_{i=0}^k
\zeta_i^2 \Upsilon}{2 \sum_{i=0}^k \zeta_i}
\end{equation}
and consequently,
\begin{eqnarray}
\label{temp23}
\lim_{k \rightarrow \infty} \left( \min_{0 \leq i \leq k}
g(\vect{\lambda}(i), \vect{\mu}(i)) - g(\vect{\lambda}', \vect{\mu}')
\right)
 \leq  \lim_{k \rightarrow \infty} \left( \frac{\Lambda +
\sum_{i=0}^k \zeta_i^2 \Upsilon}{2 \sum_{i=0}^k \zeta_i} \right)
\mbox{.}
\end{eqnarray}
Since $\zeta_i, i=1,2,\cdots$ are such that $\sum_{i=0}^{\infty} \zeta_i = \infty$ and $\sum_{i=0}^{\infty} \zeta_i <
\infty$, 
\[
\lim_{k \rightarrow \infty} \left( \frac{\Lambda + \sum_{i=0}^k
\zeta_i^2 \Upsilon}{2 \sum_{i=0}^k \zeta_i} \right) = 0
\mbox{,}
\]
and,  therefore, by \eqref{temp23},
\begin{equation}
\lim_{k \rightarrow \infty} \left(\min_{0 \leq i \leq k}
g(\vect{\lambda}(i), \vect{\mu}(i)) - g(\vect{\lambda}',
\vect{\mu}')\right) = 0
\mbox{,}
\end{equation}
\ie,
\begin{equation}
\lim_{k \rightarrow \infty} g(\vect{\lambda}_{\mathrm{min}}(k),
\vect{\mu}_{\mathrm{min}}(k)) = g(\vect{\lambda}', \vect{\mu}')
\mbox{.}
\end{equation}
\end{proof}

\end{document}


\section{A theorem}

\stmt{thrm}{sample}{Socrates is mortal.}

\refstmt{sample} can either be proven using data (the fact that
Socrates is dead), or by the proof which is provided in the appendix.

\section{Appendix}
In this appendix, the reader will find proofs of theorems not given in the text.

\stmtproof{sample}{Socrates is a man. All men are mortal.}

\rptstmtwithproof{sample}

\comment{Notice that you can put the \stmtproof{sample}{...} anywhere
you want, including right after your statement, just before using
\rptstmtwithproof, or a separate file (then use \input{proofs.tex}). That
way, neither your finished paper nor your source will be cluttered with
proofs.}